\documentclass[reqno,11pt]{amsart}
\usepackage[colorlinks=true, linkcolor=blue, citecolor=blue]{hyperref}

\usepackage{amssymb}
\usepackage{amsmath, graphicx, rotating}
\usepackage{color}
\usepackage{soul}
\usepackage[dvipsnames]{xcolor}

\usepackage[none]{hyphenat}

\allowdisplaybreaks
\usepackage{ifthen}
\usepackage{xkeyval}
\usepackage{todonotes}
\usepackage[T1]{fontenc}
\usepackage{lmodern}
\usepackage[english]{babel} 
\usepackage{microtype}  

\usepackage{ upgreek }
\usepackage{stmaryrd}
\SetSymbolFont{stmry}{bold}{U}{stmry}{m}{n}
\usepackage{amsthm}
\usepackage{float}

\usepackage{ bbm }
\usepackage{ stmaryrd }
\usepackage{ mathrsfs }
\usepackage{ frcursive }
\usepackage{ comment }

\usepackage{pgf, tikz}
\usetikzlibrary{shapes}
\usepackage{varioref}
\usepackage{enumitem}
\usepackage{appendix}

\definecolor{rouge}{rgb}{0.7,0.00,0.00}
\definecolor{vert}{rgb}{0.00,0.5,0.00}
\definecolor{bleu}{rgb}{0.00,0.00,0.8}
\usepackage[margin=1in]{geometry}
\newtheorem{theorem}{Theorem}[section]
\newtheorem*{theorem*}{Theorem}

\newtheorem{proposition}[theorem]{Proposition}

\labelformat{hypothesis}{\textbf{M\kern-0.1mm#1}}

\newtheorem{condition}{Condition}

\newtheorem{assumption}{Assumption}

\labelformat{conditionA}{\textbf{A\kern-0.1mm#1}}

\usepackage{amsthm}

\theoremstyle{definition}
\newtheorem{example}[theorem]{Example}
\newtheorem{remark}[theorem]{Remark}

\numberwithin{equation}{section}

\def\leq{\le}

\def\be{{\beta}}

\def\la{{\lambda}}

\def\cL{{\mathscr L}}

\def\al{{\alpha}}

\def\be{{\beta}}

\def\la{{\lambda}}

\def\al{{\alpha}}
\def\R{{\mathbb{R}}}

\def\va{{\varphi}}
\def\d{{\rm d}}

\begin{document}

\title[Long-Time Behaviors of Branching-Diffusion Processes]
{Long-Time Behaviors of Branching-Diffusion Processes via Spectral Analysis}

\author{Kang Dai \qquad  Jian Wang}

\thanks{\emph{K. Dai:}
School of Mathematics and Statistics, Fujian Normal University, 350117 Fuzhou, P.R. China.
\texttt{kangdaimath@163.com}}
	
\thanks{\emph{J. Wang:}
School of Mathematics and Statistics \& Key Laboratory of Analytical Mathematics and Applications (Ministry of Education) \& Fujian Provincial Key Laboratory of Statistics and Artificial Intelligence, Fujian Normal University, 350117 Fuzhou, P.R. China.
\texttt{jianwang@fjnu.edu.cn}}

\maketitle

\begin{abstract}
We study long-time behaviors for branching-diffusion process corresponding to the drifted Schr\"odinger operator
$\cL = \frac{1}{2} \Delta + \langle \nabla V,\nabla \rangle - K$, where $K$ represents the reduction rate of a population dynamics and $\nabla V$ is a given drift term. In particular, we establish exponential convergence rates for the total mass of this process and characterize its quasi-stationary distribution. The proof is based on a novel transformation in spectral analysis, and heat kernel estimates for  Schr\"odinger operators with unbounded potentials. The result is new even in the one-dimensional setting, which especially improves the recent work \cite{CMS}.

\medskip

\noindent\textbf{Keywords:} branching-diffusion process; total mass; quasi-stationary distribution; Schr\"odinger operator; heat kernel estimate

\medskip

\noindent \textbf{MSC 2020:} 60J85; 60J60; 47D08; 35K08
\end{abstract}
\allowdisplaybreaks

\maketitle
\section{Introduction}\label{Introduction}
Branching-diffusion processes and their applications to population dynamics have been an active subject in probability and mathematical biology. 
In the setting of our paper, we suppose that each individual is described by a trait (or position) $x \in \R^d$, with birth and death occurring in continuous time. More precisely, we study a branching-diffusion process $(Z_t)_{t \ge 0}$ defined as follows: individuals experience trait variations modeled by the following diffusion process on $\R^d$: 
\begin{align}\label{sde}
	\d X_t = \nabla V(X_t)\,\d t + \d B_t,
\end{align}	
where $(B_t)_{t\ge0}$ is a $d$-dimensional standard Brownian motion, and $\nabla V$ is a given drift term such that $V\in C^2(\R^d)$; an individual with trait $x$ gives birth to an individual with the same trait at rate $b(x)$ and dies at rate $d(x)$.
We denote \[K(x) := d(x) - b(x)\] by the reduction rate with trait $x$;
equivalently, $-K(x)$ is the growth rate.
In this paper, we assume that $K(x)$ is locally bounded.
Let $\mathcal{A}_t $ be the set of individuals alive at time $t$. Then, the branching-diffusion process $(Z_t)_{t\ge0}$ is the empirical measure on the trait space $\R^d$, which is defined, for all $t\ge0$, by
\begin{align}\label{em}
	Z_t = \sum_{i \in \mathcal{A}_t} \delta_{X_t^i},
\end{align}
where $X_t^i$ is the trait of individual $i$ at time $t$ and is determined by the diffusion process given by \eqref{sde}.
The total mass of the branching-diffusion process is given by
\[
N_t = \#\mathcal{A}_t = \langle Z_t, 1 \rangle.
\]
The main objective of this note is to study the long-time behavior for $N_t$.

Denote by
\begin{align}\label{L}
	\cL = \frac{1}{2} \Delta + \langle{\nabla V,\nabla}\rangle - K
\end{align}
the generator of the branching-diffusion process $(Z_t)_{t\ge0}$.
It is well known that the operator $\cL$ is symmetric with respect to $\mu(\d x) = e^{2V(x)} \d x$.
The following assumption is always imposed throughout the paper.

\begin{assumption}\label{A0} \hfill
	\begin{itemize}
		\item[{\rm(i)}] \begin{equation}\label{K1}\lim_{|x|\to\infty}\widetilde K(x):= \lim_{|x|\to\infty} \left( K(x) + \frac{1}{2}\Delta V(x)
			+ \frac{1}{2}|\nabla V(x)|^2 \right) = \infty,\end{equation}
		and for any $\theta>0$,
		\begin{equation}\label{K2}\int_{\R^d} e^{-\theta \widetilde K(x)}\,\d x< \infty.\end{equation}
		\item[{\rm(ii)}] \begin{equation}\label{e:ess}\limsup_{|x|\to \infty} \frac{V_-(x)}{|x|^2}=0,\quad \limsup_{|x|\to \infty}\frac{V_-(x)}{\inf_{z\in B(x,|x|/2)}\widetilde K(z)}<\infty;\end{equation} or \begin{equation}\label{e:ess2}\limsup_{|x|\to \infty} \frac{|x|^2}{\inf_{z\in B(x,|x|/2)}\widetilde K(z)}<\infty, \quad \limsup_{|x|\to\infty}\frac{V_-(x)}{|x|\inf_{z\in B(x,|x|/2)}\widetilde K(z)^{1/2}}=0.\end{equation}
	\end{itemize}
\end{assumption}

We will  see that under \eqref{K1}, the operator $-\cL$ has discrete spectrum in $L^2(\R^d;\d \mu)$, and there exists an orthonormal basis in $L^2(\R^d; \d\mu)$ of eigenfunctions $\{\va_n\}_{n\ge0}$ associated with the corresponding eigenvalues $\{\la_n\}_{n\ge0}$ so that
\[
\lambda_0 < \lambda_1 \leq \lambda_2 \leq \cdots, \quad \lim_{n \to \infty} \lambda_n = \infty.
\]
Moreover, there exists a version of $\va_0$ which is continuous and strictly positive, e.g. see the proof of \cite[Proposition 1.2]{CW0}.

For $c, c_0>0$, let $H_{c,c_0}(x)$ be a positive, bounded and continuous function defined on $\R^d$ so that for $x\in \R^d$ with $|x|$ large enough
\begin{equation}\label{e:function}
	H_{c,c_0}(x)=\begin{cases}
		\displaystyle\exp\left(-V_+(x)\right)\left[\exp\left(-c\inf_{z\in B(x,|x|/2)}\widetilde K(z)\right)+\exp(-c_0|x|^2)\right],&\quad \hbox{ if } \eqref{e:ess} \hbox{ holds;} \\
		\displaystyle \exp\left(-V_+(x)\right)\exp\left(-c_0|x|\inf_{z\in B(x,|x|/2)}\widetilde K(z)^{1/2}\right),&\quad \hbox{ if } \eqref{e:ess2} \hbox{ holds.}
	\end{cases}
\end{equation}
The main contribution of this note is as follows.

\begin{theorem}\label{main result0}
	Under Assumption {\rm\ref{A0}}, if
	\begin{align}\label{ass}
		\mu( H_{c,c_0}) <\infty
	\end{align} holds for all $c>0$ large enough and $c_0>0$ small enough, then for any $x\in \R^d$,
	\begin{align}
		\lim_{t \to \infty} e^{\lambda_0 t} \mathbb{E}_x (N_t)
		= \va_0(x) \mu(\va_0);
	\end{align}
	in particular, for any $x \in \R^d$,
	\begin{align}\label{main0}
		\lim_{t \to \infty} \frac{\log (\mathbb{E}_x (N_t))}{t}  = -\lambda_0.
	\end{align}
\end{theorem}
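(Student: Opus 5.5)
The proof rests on the many-to-one (Feynman–Kac) formula. Let $P_t=e^{t\cL}$ be the Feynman–Kac semigroup $P_tf(x)=\mathbb{E}_x\big[f(X_t)\exp(-\int_0^t K(X_s)\,\d s)\big]$. Then $\mathbb{E}_x(N_t)=P_t1(x)$, and the claim becomes
\[
e^{\la_0 t}P_t1(x)\to \va_0(x)\mu(\va_0).
\]
We pass to the ground state representation through a unitary transformation. The map $U f=e^{V}f$ sends $L^2(\d\mu)$ onto $L^2(\d x)$, and it conjugates $-\cL$ to the Schr\"odinger operator
\[
H=-\tfrac12\Delta+\widetilde K .
\]
Indeed, $e^{V}(\frac12\Delta+\langle\nabla V,\nabla\rangle)(e^{-V}g)=\frac12\Delta g-\frac12(\Delta V+|\nabla V|^2)g$. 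By \eqref{K1}, $\widetilde K\to\infty$, so $H$ has compact resolvent. This gives the discrete spectrum stated above.

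\textbf{Step 1 (kernel expansion).} Let $q_t(x,y)$ be the heat kernel of $e^{-tH}$ on $L^2(\d x)$. The kernel of $P_t$ with respect to $\mu$ is
\[
p_t(x,y)=e^{-V(x)}e^{-V(y)}q_t(x,y).
\]
Assumption \eqref{K2} says $\int e^{-\theta\widetilde K}\,\d x<\infty$ for all $\theta$. Via Golden–Thompson, this gives $\mathrm{tr}\, e^{-tH}\le (2\pi t)^{-d/2}\int e^{-t\widetilde K}\,\d x<\infty$, so $e^{-tH}$ is trace class for every $t>0$. Hence we have the pointwise expansion
\[
p_t(x,y)=\sum_n e^{-\la_n t}\va_n(x)\va_n(y),
\]
which converges absolutely for $t>0$. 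Moreover, with $\psi_n=e^{V}\va_n$ and $\lambda_n\ge\lambda_1$, we get the remainder bound
\[
\Big|e^{\la_0 t}p_t(x,y)-\va_0(x)\va_0(y)\Big|\le e^{-(\la_1-\la_0)(t-1)}\,e^{-V(x)-V(y)}\,\big(q_1(x,x)\,q_1(y,y)\big)^{1/2}\cdot C .
\]
This follows from Cauchy–Schwarz applied to $\sum_{n\ge1}e^{-(\la_n-\la_0)t}|\psi_n(x)\psi_n(y)|$, after splitting off a factor $e^{-\la_n}$ and using $\sum_n e^{-\la_n}\psi_n(z)^2=q_1(z,z)$.

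\textbf{Step 2 (integration against $\mu$).} Integrate the remainder bound in $y$ against $\mu(\d y)=e^{2V(y)}\,\d y$. The $y$-dependent factor becomes
\[
\int e^{V(y)}q_1(y,y)^{1/2}\,\d y .
\]
We also need $\mu(\va_0)<\infty$, which is the same kind of integral with $\psi_0(y)=e^{-\la_0}\int q_1(y,z)\psi_0(z)\,\d z\le C\,q_1(y,y)^{1/2}$ (by Cauchy–Schwarz and $\|\psi_0\|_{L^2}=1$). So it suffices to prove the on-diagonal heat kernel bound
\[
q_1(y,y)^{1/2}\le C\,e^{-V_+(y)}\,e^{-V(y)}\,H_{c,c_0}(y)\quad\text{(up to harmless factors)}
\]
for $|y|$ large. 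Given this bound, \eqref{ass} yields integrability, dominated convergence gives the limit $\va_0(x)\mu(\va_0)$, and \eqref{main0} follows by taking logarithms, since $\va_0(x)\mu(\va_0)\in(0,\infty)$.

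\textbf{Step 3 (main obstacle: heat kernel estimates for $H$ with unbounded potential).} To bound $q_1(y,y)$, we use the Feynman–Kac representation for Brownian motion killed at rate $\widetilde K$ and split according to whether the path leaves $B(y,|y|/2)$ before time $1$.
\begin{itemize}
\item If the path stays in the ball, the killing factor is at most $\exp(-c\inf_{B(y,|y|/2)}\widetilde K)$.
\item If it exits, the Gaussian exit probability gives a factor $e^{-c_0|y|^2}$.
\end{itemize}
This yields $q_1(y,y)\lesssim e^{-c\inf\widetilde K}+e^{-c_0|y|^2}$, which is the case \eqref{e:ess}. In the case \eqref{e:ess2}, we optimize over the time scale: we use $q_1\le q_s^{*}$-type Davies/Agmon estimates with a running time $s\sim |y|/\inf\widetilde K^{1/2}$, which gives the decay $\exp(-c_0|y|\inf\widetilde K^{1/2})$.

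The delicate part is absorbing the factors $e^{\pm V}$. The conditions on $V_-$ in \eqref{e:ess}--\eqref{e:ess2} are exactly what lets a factor $e^{V_-(y)}$ be dominated by a small fraction of the Gaussian or killing decay, adjusting $c$ and $c_0$. The remaining factor $e^{-V_+}$ then appears as in \eqref{e:function}. I expect this bookkeeping to be the hardest step, together with the uniformity needed so that the bound holds with the $c$ and $c_0$ allowed in \eqref{ass}.
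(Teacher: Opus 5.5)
Your route is essentially the paper's. You conjugate $\cL$ to $\frac12\Delta-\widetilde K$ via $e^{\pm V}$, expand the trace-class kernel in eigenfunctions, and bound the on-diagonal Schr\"odinger heat kernel by splitting paths according to whether they leave $B(y,|y|/2)$; the paper does this through $\widetilde p(t,x,x)\le c(t-s)^{-d/2}\widetilde P_s1(x)$ and a survival estimate. You then absorb $e^{V_-}$ using \eqref{e:ess} or \eqref{e:ess2} and integrate against $\mu$ using \eqref{ass}. Your two variations are sound and slightly tidier than the paper. First, Cauchy--Schwarz on the whole remainder series, $\sum_{n\ge1}e^{-\la_nT}|\psi_n(x)\psi_n(y)|\le (q_T(x,x)q_T(y,y))^{1/2}$, replaces the bound $|\widetilde\va_n|\le e^{\la_nT/2}\widetilde p(T,\cdot,\cdot)^{1/2}$ followed by summing $\sum_n e^{-\la_n(t-T)}$, and it lets you integrate the remainder kernel directly. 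Second, Golden--Thompson gives trace class straight from \eqref{K2}. That is more direct than reading it off a heat-kernel bound stated in terms of $\inf_{B(x,|x|/2)}\widetilde K$.

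The step that fails as written is fixing the time at $1$. A time-$1$ estimate only gives $q_1(y,y)\le C\big[\exp(-c_*\inf_{B(y,|y|/2)}\widetilde K)+\exp(-c_*|y|^2)\big]$, with $c_*$ fixed by the heat-kernel bound. Yet \eqref{ass} is assumed only for $c$ large, and \eqref{e:ess} only gives $V_-\le A\inf_{B(y,|y|/2)}\widetilde K$ with a possibly large $A$. These constants cannot be ``adjusted''. This is not merely a loss in the argument. Take $V=B(1+|y|^2)^{\beta/2}$ and $\widetilde K=(1+|y|^2)^{\beta/2}$ with $\beta<2$ and $B>1/2$. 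Then \eqref{ass} holds for $c>2^\beta B$. But Jensen along the Brownian bridge gives $q_1(y,y)\ge c'e^{-\widetilde K(y)}$, so your integral $\int e^{V(y)}q_1(y,y)^{1/2}\,\d y$ diverges.

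The fix is to use the time as the lever. Step~1 works verbatim with $q_{T_0}$ in place of $q_1$ for $t\ge T_0$. Taking $s=T_0/2$ with $T_0$ large, the killing exponent (a multiple of $T_0\inf_{B(y,|y|/2)}\widetilde K$) dominates both $V_-(y)$ and $c\inf_{B(y,|y|/2)}\widetilde K$. This is exactly Proposition~\ref{P:2.1}(i); under \eqref{e:ess2} a bounded time suffices.

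There are also two bookkeeping slips:
\begin{enumerate}
\item What Step~2 needs is $e^{-V(y)}q_{T_0}(y,y)^{1/2}\le C\,H_{c,c_0}(y)$, i.e.\ $e^{V_-}q_{T_0}^{1/2}\le C e^{V_+}H_{c,c_0}$, since $H_{c,c_0}$ already carries $e^{-V_+}$. Your displayed inequality with the extra factor $e^{-V_+}e^{-V}$ is not what is needed.
\item Writing $\psi_0=e^{\la_0}\int q_1(\cdot,z)\psi_0(z)\,\d z$ and applying Cauchy--Schwarz yields $q_2(y,y)^{1/2}$, not $q_1(y,y)^{1/2}$. Use $e^{-\la_0T_0}\psi_0(y)^2\le q_{T_0}(y,y)$ instead.
\end{enumerate}
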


According to \eqref{main0}, the sign of $\la_0$ determines the long-term trend of the total mass $N_t$.
Specifically, if $\la_0 < 0$, then the population grows exponentially; if $\la_0 > 0$, then the population decays exponentially, leading to extinction in the long run; and if $\la_0 = 0$, then the population remains relatively stable.

There have been plenty of results on the study of population dynamics.
Some authors have proved the law of large numbers for the empirical measure $(Z_t)_{t\ge0}$ defined by \eqref{em} by mean of an auxiliary process, see \cite{BDMT,C,GB}.
The existence, the uniqueness and the domain of attraction of quasi-stationary distributions for some diffusion models arising from population dynamics were studied in \cite{CCLMMJ,CMS0,MV,SE}. Furthermore, under the so-called come down from infinity property, the associated $Q$-process is characterized, and populations starting from arbitrarily large sizes enter finite states in finite time, ensuring that the quasi-stationary  distribution attracts all initial distributions in these biological settings.
The readers can be referred to \cite{BT,D,DM,E,FM,M} and references therein for more of the history on the study of population dynamics.

An important source of inspiration for our work  is a recent paper \cite{CMS}, which studies the long-time behavior of linear functionals of branching-diffusion processes via the spectral properties of the Feynman-Kac semigroup in the one-dimensional setting, under the assumptions that $|V(x)|$ has at most linear growth and $|K(x)|$ has at least linear growth.
The example below indicates that our main result, Theorem \ref{main result0}, improves \cite[Theorem 2.1]{CMS} under more general hypotheses, and also extends it to high-dimensional settings.

\begin{example}\label{example} \it Suppose that there exist constants $\alpha\ge0$, $\beta>0$ and $c_1,c_2,c_3,c_4>0$ so that
	\begin{itemize}
		\item[{\rm(i)}] \[\alpha<2,\,\,\alpha\le \beta\] or
		\[\beta\ge2,\,\, \alpha<1+\beta/2.\]
		\item[{\rm(ii)}]  for all $x\in\R^d$, \[\max\big\{V(x) - V_-(x), V_-(x)\big\} \le c_1{|x|}^{\al}+c_2\] and \[K(x)+ \frac{1}{2}\Delta V(x)
		+ \frac{1}{2}|\nabla V(x)|^2 \ge c_3{|x|}^{\be}-c_4.\]
	\end{itemize}
	Then, Theorem {\rm \ref{main result0}} holds.
	In particular, if {\rm(ii)} above holds with $0\le\alpha\le \beta$, $\beta>0$ and $\alpha<2$, then \eqref{ass} is fulfilled for $c>0$ large enough.
	Consequently, Theorem {\rm\ref{main result0}} covers  {\rm\cite[Theorem 2.1]{CMS}}, and it is also more general in sense that it works for the high-dimensional settings.\end{example}

Example \ref{example} is interesting, and it can be directly applied to the Ornstein-Uhlenbeck (O-U) branching-diffusion process (e.g.\ see \cite{EZ}). That is, the diffusion process \eqref{sde} is given by
\begin{equation}\label{SDE-OU}
	\d X_t = c X_t \,\d t + \d B_t,
\end{equation}
where $c\in \R$ is a non-zero constant. The process \eqref{SDE-OU} corresponds to $V(x) = {c}|x|^2 /2$ in our paper. We shall note that the framework of \cite{CMS} excludes the O-U branching-diffusion process, since it requires the term $|V(x)|$ to have at most linear growth. In particular, for \eqref{SDE-OU}, if the reduction rate  \begin{equation}  K(x)
	\ge c_1 |x|^\beta - c_2  \end{equation}
with $c_1, c_2 > 0$ and $\beta > 2$, then the assertion Theorem {\rm \ref{main result0}} holds.
 We point out that a more recent preprint \cite[Remark,\ p.8]{CMS25} points out on the possibility of relaxing such kind linear growth condition in one-dimensional case. Nevertheless, our work presents a rigorous proof for this relaxation even in the high-dimensional setting.

The novelty of our approach is by introducing a new transformation of the generator $\cL$. More explicitly, the importance of this transformation lies in its capacity to absorb the original drift term of $\cL$ into the new potential $\widetilde{K}(x)$, thereby converting the operator $\cL$ into a canonical Schr\"odinger operator $\widetilde\cL$ without drift. With aid of this, we then can make full use of heat kernel estimates for Schr\"odinger operators with unbounded potentials developed in \cite{CW}. The idea of our approach is completely different from that of \cite[Theorem 2.1]{CMS}, where the probabilistic ideas such as the Girsanov transform are adopted. Roughly speaking, the proof of Theorem \ref{main result0} mainly arises from the analytical perspective.
Compared with the Girsanov transform used in \cite[Theorem 2.1]{CMS}, our method here does not require any martingale-related hypotheses (for instance, Novikov's condition), which extensively improves \cite[Theorem 2.1]{CMS}.

As an application of our approach, we can characterize the existence and uniqueness of the quasi-stationary distribution for the branching-diffusion system.
\begin{theorem}\label{THquasi}
	Under Assumption {\rm\ref{A0}} and \eqref{ass},  the probability measure $\nu$ defined by
	\begin{align}\label{quasi-mes}
		\nu(\d x) = \frac{\va_0(x)}{\displaystyle\int_{\R^d} \va_0(y) \,\mu(\d y)} \,\mu(\d x)
	\end{align}
	is the unique quasi-stationary distribution for the Feynman-Kac semigroup $(P_t)_{t\ge 0}$ associated with the branching-diffusion process $(Z_t)_{t\ge 0}$ $($see \eqref{E:FK} below$)$. 
 That is, for any $t>0$ and any bounded measurable function $\phi$, it holds that
	\begin{align}\label{quasi}
		\int_{\R^d} \mathbb{E}_x \left[ \exp\left(-\int_0^t K(X_s) \,\d s\right) \phi(X_t) \right] \nu(\d x) = e^{-\lambda_0 t} \int_{\R^d} \phi(x)\, \nu(\d x).\end{align}
	In particular,
	\begin{align}\label{quasi1}
		\frac{\displaystyle\int_{\R^d} \mathbb{E}_x \left[ \exp\left(-\int_0^t K(X_s)\, \d s\right) \phi(X_t) \right] \,\nu(\d x)}{\displaystyle\int_{\R^d} \mathbb{E}_x \left[ \exp\left(-\int_0^t K(X_s) \,\d s\right)\right] \,\nu(\d x)} = \int_{\R^d} \phi(x) \,\nu(\d x).\end{align}
\end{theorem}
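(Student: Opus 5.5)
Our plan is to treat \eqref{quasi} as the statement that $\nu$ is a left eigenmeasure of the Feynman--Kac semigroup $P_t\phi(x)=\mathbb{E}_x[\exp(-\int_0^tK(X_s)\,\d s)\phi(X_t)]$ with eigenvalue $e^{-\lambda_0 t}$. Since $\cL$ is symmetric in $L^2(\R^d;\d\mu)$, $P_t=e^{t\cL}$ is self-adjoint there, and $P_t\va_0=e^{-\lambda_0 t}\va_0$ in $L^2(\mu)$. Formally this gives
\[
\int P_t\phi\,\va_0\,\d\mu=\int \phi\, P_t\va_0\,\d\mu=e^{-\lambda_0 t}\int\phi\,\va_0\,\d\mu ,
\]
which is \eqref{quasi} once both sides are divided by $\mu(\va_0)$. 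Taking $\phi\equiv1$ in \eqref{quasi} and dividing gives \eqref{quasi1}.

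The first step is to justify this formal computation for bounded $\phi$, which need not lie in $L^2(\mu)$ because $\mu$ may be infinite. Two facts are needed.
\begin{enumerate}
\item $\va_0\in L^1(\mu)$, so that $\nu$ is a genuine probability measure. This is the key point where \eqref{ass} enters. Via the transformation $\va_0=e^{-V}\widetilde\va_0$, with $\widetilde\va_0$ the ground state of $\widetilde\cL=\tfrac12\Delta-\widetilde K$, the heat kernel estimates of \cite{CW} give the pointwise decay
\[
\va_0(x)\le C e^{-V(x)}\Big[\exp\big(-c\inf_{B(x,|x|/2)}\widetilde K\big)+e^{-c_0|x|^2}\Big]
\]
(or the analogue in case \eqref{e:ess2}). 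We also use $e^{-V}\le e^{-V_+}e^{V_-}$, and the growth conditions on $V_-$ in \eqref{e:ess}/\eqref{e:ess2} let $e^{V_-}$ be absorbed by adjusting $c,c_0$. This yields $\va_0\le C' H_{c,c_0}$ for suitable constants, so $\mu(\va_0)<\infty$ by \eqref{ass}. These are the same ingredients used in the proof of Theorem \ref{main result0}, and I would reuse them directly.
\item The symmetry identity holds for bounded $\phi$. For $\phi\ge0$, truncate: $\phi_n=\phi\mathbf 1_{B(0,n)}\in L^2(\mu)$. Self-adjointness gives the identity for each $\phi_n$, and monotone convergence on both sides passes to $\phi$. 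Splitting into positive and negative parts handles general bounded $\phi$, with finiteness guaranteed by $\va_0\in L^1(\mu)$ and $|P_t\phi|\le\|\phi\|_\infty P_t1$.
\end{enumerate}
One must also check that the probabilistic semigroup coincides with $e^{t\cL}$ on $L^2(\mu)$, and that the continuous version of $\va_0$ satisfies $P_t\va_0=e^{-\lambda_0t}\va_0$ $\mu$-a.e. Both follow from the standard Feynman--Kac representation for locally bounded $K$ together with the heat kernel construction, since $P_t$ has a continuous kernel.

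For uniqueness within the class considered, suppose $\nu'$ is a quasi-stationary distribution, i.e.\ $\nu'P_t=\theta(t)\nu'$ for all $t$. Then $\theta(t)=e^{-\lambda t}$ by the semigroup property. We would use the expansion $p_t(x,y)=\sum_n e^{-\lambda_nt}\va_n(x)\va_n(y)$ and the asymptotics from Theorem \ref{main result0}, namely $e^{\lambda_0t}P_t\phi(x)\to\va_0(x)\mu(\phi\va_0)$, applied with dominated convergence using the uniform bound $e^{\lambda_0 t}P_t1\le C\va_0$ for $t\ge1$ (intrinsic ultracontractivity-type bound from \cite{CW}). Integrating against $\nu'$ gives $\lambda=\lambda_0$ and $\nu'(\phi)=\mu(\phi\va_0)\,\nu'(\va_0)/\nu'(\va_0)\cdot(\dots)$, so that $\nu'\propto\va_0\,\d\mu$, i.e.\ $\nu'=\nu$.

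I expect the main obstacle to be the uniform domination $e^{\lambda_0t}P_t1\le C\va_0$, equivalently the pointwise control of $\va_0$ and of the remainder of the spectral series, under only \eqref{ass}. If a full uniform bound is not available, I would restrict uniqueness to quasi-stationary distributions $\nu'$ for which $\nu'(\va_0)$ and $\nu'(e^{\lambda_0}P_1 1)$ are finite. In that setting, convergence follows from $L^2$ spectral-gap decay $e^{-(\lambda_1-\lambda_0)t}$ combined with the heat-kernel bound $p_1(x,\cdot)\in L^2(\mu)$.
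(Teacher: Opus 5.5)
Your existence argument is correct and uses the same ingredients as the paper. These are: symmetry of $P_t$ in $L^2(\mu)$; the eigen-relation $P_t\va_0=e^{-\la_0 t}\va_0$; and $\va_0\le C H_{c,c_0}$ from \eqref{e:add3}, so that $\mu(\va_0)<\infty$ by \eqref{ass}. The paper instead inserts the expansion \eqref{sd} into the double integral and kills the terms $n\ge1$ by orthogonality. Your truncation $\phi\mathbf{1}_{B(0,n)}$ plus monotone convergence (equivalently, Tonelli with $p(t,x,y)\ge0$ and $\va_0>0$) is slightly more robust, since it needs no control of the series. By contrast, \eqref{e:add3} and \eqref{e:add2} make the paper's termwise interchange absolutely convergent only for $t>T_0/2$.

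The genuine gap is in the uniqueness part. The paper does not prove uniqueness (it defers to \cite{CMS}), so your sketch must stand on its own, and its key step fails. The domination $e^{\la_0 t}P_t1\le C\va_0$ is an intrinsic-ultracontractivity-type bound, and it is false in general under the theorem's hypotheses. Take $V\equiv0$ and $K(x)=|x|^\beta$ with $0<\beta<2$; this case is covered by Example \ref{example}, so Assumption \ref{A0} and \eqref{ass} hold. Keeping the Brownian path inside $B(x,1)$ gives $P_t1(x)\ge c_t\,e^{-t(|x|+1)^\beta}$. On the other hand, Proposition \ref{P:2.1}(i) holds for every $c>0$ and gives $\va_0(x)\le C_c e^{-c|x|^\beta}$ for every $c>0$. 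Hence $P_t1/\va_0$ is unbounded for each fixed $t$. Your fallback, restricting to $\nu'$ with $\nu'(e^{\la_0}P_11)<\infty$, proves a weaker statement than the theorem claims. Also, the displayed conclusion ``$\nu'(\phi)=\mu(\phi\va_0)\,\nu'(\va_0)/\nu'(\va_0)\cdot(\dots)$'' is not an actual derivation.

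The missing observation is that $\nu'$ is a probability measure, so you only need uniform boundedness, not domination by $\va_0$, and Theorem \ref{main result1} supplies it.
\begin{enumerate}
\item For bounded $\phi$, condition \eqref{ass1} follows from \eqref{ass}. Theorem \ref{main result1} then gives $\sup_x|e^{\la_0 t}P_t\phi(x)-\va_0(x)\mu(\phi\va_0)|\le C_\phi e^{-(\la_1-\la_0)t}$ for $t>T_0$. This holds for every $x$, not just $\mu$-a.e., because \eqref{sd} and \eqref{e:add3} hold pointwise for the continuous versions.
\item $\va_0$ is bounded and strictly positive, so $0<\nu'(\va_0)<\infty$ for every probability measure $\nu'$.
\item Integrating the uniform estimate against $\nu'$ gives $e^{\la_0 t}\nu'(P_t\phi)\to\nu'(\va_0)\mu(\phi\va_0)$.
\item Insert this, for $\phi$ and for $1$, into $\nu'(P_t\phi)=\nu'(P_t1)\,\nu'(\phi)$ and let $t\to\infty$. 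This yields $\nu'(\phi)=\mu(\phi\va_0)/\mu(\va_0)=\nu(\phi)$.
\end{enumerate}
This route needs no prior identification $\theta(t)=e^{-\lambda t}$ and no restriction on the class of $\nu'$; in particular $\nu'(\va_0)<\infty$ is automatic.
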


\section{Preliminaries} \label{pf1}

This section is devoted to presenting some preliminaries for the proof of Theorem \ref{main result0}, and it contains two parts. In Subsection \ref{ode}, we introduce a key transformation, which reduces the generator $\cL$ into a Schr\"odinger operator $\widetilde \cL$ without drift, but keeps the associated spectral unchanged.
In Subsection \ref{hk}, we apply heat kernel estimates for Schr\"odinger operators with unbounded potentials, and get explicit decay properties of the eigenfunctions associated with the operator $\cL$.
For simplicity, in what follows, for any $p\in [1,\infty]$, $L^p(\R^d; \d x)$ and $L^p(\R^d; \d\mu)$ are denoted by $L^p(\d x)$ and $L^p(\mu)$ respectively.
We will use $c$ and $C$, with or without subscripts, to denote strictly positive finite constants whose values are insignificant and may change from line to line.

\subsection{A key transformation}\label{ode}
Let $C_{c}^{2}(\R^d)$ be the class of twice differentiable functions on $\R^d$ with compact support. Let $\cL$ be defined by \eqref{L}.
For $f\in C_c^{2}(\R^d)$, define
\begin{equation}\label{e:ope-}\begin{split}
		\widetilde\cL f:&=e^{V} \cL (e^{-V}f)\\
		&=\frac{1}{2} \Delta f
		- \left( K + \frac{1}{2} \Delta V
		+ \frac{1}{2} |\nabla V|^2 \right) f  \\
		&=:\frac{1}{2} \Delta f
		- \widetilde K f.
\end{split}\end{equation} Indeed,
\begin{align*}
	\cL ( e^{-V}f )
	&=\frac{1}{2} \Delta (e^{-V}f)
	+ \langle \nabla V,\nabla(e^{-V}f) \rangle
	- K (e^{-V}f)\\
	&= -\frac{1}{2}e^{-V}f \Delta V
	+\frac{1}{2}e^{-V}f \left| \nabla V \right|^2 - e^{-V} \langle \nabla V,\nabla f \rangle + \frac{1}{2}e^{-V} \Delta f \\
	&\quad~ - e^{-V} f \left| \nabla V \right|^2 + e^{-V} \langle \nabla V,\nabla f \rangle - K (e^{-V}f)\\
	& =e^{-V}\left[\frac{1}{2} \Delta f
	- \left( K + \frac{1}{2} \Delta V
	+ \frac{1}{2} |\nabla V|^2 \right) f \right].
\end{align*}
The advantage of the operator $\widetilde \cL$ is that it is the canonical Schr\"odinger operator with the potential $\widetilde{K}$ (without drift) on $L^2(\d x)$.
It directly follows from \eqref{e:ope-} that $\cL f(x)= \lambda f(x)$ holds for $f\in C^2(\R^d)$ and $\lambda\in \R$, if and only if $\widetilde \cL \tilde f(x)=\lambda \tilde f(x)$, where $\tilde f(x)= e^{V(x)}f(x)$. In particular, the operators $\cL$ and $\widetilde\cL$ have the same eigenvalues, and the associated eigenfunctions differ by a factor $e^{V(x)}$.

Next, we assume that \eqref{K1} holds. Then, it is well known (e.g. see \cite[Chapter 2, Theorem 3.1, p.\ 57]{BS}) that the operator $-\widetilde\cL$ has discrete spectrum in $L^2(\d x)$, and there exists an orthonormal basis in $L^2(\d x)$ of $C^{2}$-eigenfunctions $\{\widetilde\va_n\}_{n\ge0}$ associated with the corresponding eigenvalues $\{\la_n\}_{n\ge0}$ so that
\[
\lambda_0 < \lambda_1 \leq \lambda_2 \leq \cdots, \quad \lim_{n \to \infty} \lambda_n = \infty.
\]
Hence, $-\cL$ also has discrete spectrum consisting of eigenvalues $\{\la_n\}_{n\ge0}$. Moreover, let $\{\va_n\}_{n\ge0}$ be the associated eigenfunctions of $-\cL$. Then, $\va_n(x) = e^{-V(x)} \widetilde\va_n(x)$. In particular, $\| \va_n \|_{L^2(\mu)} = \| \widetilde\va_n \|_{L^2(\d x)} = 1$. The first eigenfunction $\va_0$ is called ground state in the literature.

\subsection{Estimates for eigenfunctions $\{\va_n\}_{n\ge1}$}\label{hk} Assume that Assumption  \ref{A0}(i) holds.
Let $(\widetilde P_t)_{t\ge0}$ be the Schr\"odinger semigroup associated with the operator $\widetilde \cL$, and $\widetilde p(t,x,y)$ be the associated heat kernel. That is,
for any $f\in L^2(\d x)$,
\[\widetilde P_t f(x)=\int_{\R^d} \widetilde p(t,x,y) f(y)\,\d y,\quad t>0,~x\in \R^d.\] Note that the existence of heat kernel $\widetilde p(t,x,y)$ is
well known, see \cite[Chapter 3]{CZ} or \cite[Theorem B.7.1]{S}.
The key point to get estimates for the eigenfunctions $\{\va_n\}_{n\ge1}$ is to obtain an upper bound for $\widetilde p(t,x,x)$. We first suppose that $\inf_{x\in \R^d}\widetilde K(x)>0.$ Then, for any $t>s>0$ and $x\in \R^d$,
\begin{align*}
	&\widetilde p(t,x,x)\\
	&=\int_{\R^d} \widetilde p(t-s,z,x)\widetilde p(s,x,z)\,\d z\\
	&\le c_1(t-s)^{-d/2} \widetilde P_{s}1(x)\\
	&\le c_2 (t-s)^{-d/2} \left[\exp\left(-c_3 s\left(1+\inf_{z\in B(x,|x|/2)}\widetilde K(z)\right)\right)+\exp\left(-c_3\left(s+\frac{(1+|x|)^2}{s}\right)\right)\right],
	\end{align*} where the equality above follows from the semigroup of the heat kernel $\widetilde p(t,x,y)$, in the first inequality we used the fact that $\widetilde p(t-s,z,x)\le c_1 (t-s)^{-d/2}$ for all $x,z\in \R^d$ by $\inf_{x\in \R^d}\widetilde K(x)\ge 0$ and the Feynman-Kac formula for the Schr\"odinger semigroup $(\widetilde P_t)_{t\ge0}$ (e.g.\ see \cite[(3)]{CW}), and the last inequality can be deduced from the proof of \cite[Lemma 2.5]{CW}. (Note that in our setting we do not require that the potential $\widetilde K(x)$ satisfies \cite[Assumption (H)]{CW}, so the term $\inf_{z\in B(x,|x|/2)}\widetilde K(z)$ appears. Note also that though \cite[Lemma 2.5]{CW} only claims the assertion for all $x\in \R^d$ with $|x|\ge2$,  it still holds for all $x\in \R^d$ by choosing $c_3$ small enough thanks to \cite[Lemma 2.6]{CW}.)
It is clear that, under \eqref{K2}, for all $t>0$,
\[\int_{\R^d} \widetilde p(t,x,x)\,\d x < \infty. \]
Then, for all $t>0$ and $x,y\in \R^d$,
\begin{equation}\label{e:add}\widetilde p(t,x,y)=\sum_{n=0}^\infty e^{-\lambda _n t} \widetilde\va_n(x) \widetilde\va_n(y);\end{equation}
e.g.\ see \cite[Lemma 2.1]{DS}.
Therefore, for any $n\ge0$, $t>0$ and $x\in \R^d$,
\begin{align*}|\widetilde \va_n(x)| &=e^{\lambda_nt/2} \left(e^{-\lambda_n t} |\widetilde\va_n(x)|^2\right)^{1/2}\\
	&\le e^{\lambda_nt/2}\left(\sum_{n=0}^\infty e^{-\lambda _n t} |\widetilde\va_n(x)|^2\right)^{1/2}=e^{\lambda_nt/2} \widetilde p(t,x,x)^{1/2}. \end{align*}
Hence, for any $n\ge0$, $t>s>0$ and $x\in \R^d$,
\begin{align*}
	|\widetilde \va_n(x)| \le c_4 (t-s)^{-d/4}e^{\lambda_nt/2} \left[\exp\left(-c_5 s\left(1+\inf_{z\in B(x,|x|/2)}\widetilde K(z)\right)\right)+\exp\left(-c_5\left(s+\frac{(1+|x|)^2}{s}\right)\right)\right]\!,
\end{align*}
which in turn implies that for any $n\ge0$, $t>s>0$ and $x\in \R^d$,
\begin{equation}\label{e:eif}
	\begin{split}|\va_n(x)|\le
		&c_4 (t-s)^{-d/4}e^{\lambda_nt/2}e^{-V(x)}\\
		&\times\left[\exp\left(-c_5 s\left(1+\inf_{z\in B(x,|x|/2)}\widetilde K(z)\right)\right)+\exp\left(-c_5\left(s+\frac{(1+|x|)^2}{s}\right)\right)\right].\end{split}
\end{equation}
Note that, since we assume that  $\inf_{x\in \R^d}\widetilde K(x)>0$, the term $\left(1+\inf_{z\in B(x,|x|/2)}\widetilde K(z)\right)$ in \eqref{e:eif} can be replaced with $\inf_{z\in B(x,|x|/2)}\widetilde K(z)$ by adjusting the constant $c_5$ properly. We write the form \eqref{e:eif} since it holds for general cases, which will be explained later.

For general case, we set $\widehat K(x)=\widetilde K(x) + m$, where $m>0$ satisfies $\inf_{x\in\R^d}\widetilde K(x)+m>0$. The existence of the constant $m$ is ensured by $V\in C^2(\R^d)$ and \eqref{K1}. Consider
\[
\widehat\cL
=\widetilde\cL - m
=\frac{1}{2} \Delta - (\widetilde K(x) + m)
=\frac{1}{2} \Delta - \widehat K(x).
\]
Denote by $\widehat p(t,x,y)$ the heat kernel associated with $\widehat\cL$. It holds that
\[
\widehat p(t,x,y) = e^{-mt}\widetilde p(t,x,y),\quad t>0,~x,y\in \mathbb{R}^d.
\] On the other hand, it is easy to see that
$
- \widetilde \cL f=\la  f
$ holds with $f\in C^2(\R^d)$ and $\lambda\in \R$, if and only if,
$
-\widehat\cL f= (\la+ m) f$.
This implies that $\widehat\cL$ and $\widetilde\cL$ have the same eigenfunctions $\{\widetilde\va_n\}_{n\ge0}$, and $\widehat{\la}_n=\la_n + m$,
where $(\widehat{\la}_n)_{n\ge0}$ (resp.\ $(\la_n)_{n\ge0}$) are the associated eigenvalues for the operator $-\widehat\cL$ (resp. $-\widetilde\cL$).
With these two facts at hand, we can apply the argument above to the operator $\widehat\cL$ and get that \eqref{e:eif} still holds true.

Furthermore, we can get the following two conclusions by the arguments above. First, according to \eqref{e:add}, for all $t>0$,
\begin{equation}\label{e:add2}\sum_{n=0}^\infty e^{-\lambda_n t}=\sum_{n=0}^\infty e^{-\lambda_n t}\|\widetilde \va_n\|_{L^2(\d x)}^{2}=\int_{\R^d}\tilde p(t,x,x)\,\d x<\infty.\end{equation} Secondly, by \eqref{e:eif}, for any $T_0>1$, there is a positive and locally bounded function $h(x)$ such that for all $x\in \R^d$ and $n\ge0$,
\[|\va_n(x)|\le c_{6} e^{\lambda_n T_0/2} h(x).\] The proposition below indicates that under Assumption \ref{A0}, we can take $h(x)=H_{c,c_0}(x)$ defined by \eqref{e:function}.

For $a\in \R$, set $a_+:=\max\{a,0\}$ and $a_-:=\max\{-a,0\}$. Then, we have
\begin{proposition} \label{P:2.1} Suppose that Assumption {\rm\ref{A0}} holds. Then the following statements hold.
	\begin{itemize}
		\item[{\rm(i)}] Assume that \begin{equation}\label{e:ess*}\limsup_{|x|\to \infty} \frac{V_-(x)}{|x|^2}=0,\quad \limsup_{|x|\to \infty}\frac{V_-(x)}{\inf_{z\in B(x,|x|/2)}\widetilde K(z)}<\infty.\end{equation}
		Then, for any $c>0$, there are constants $c_1,c_2, r_0>0$ and $T_0\ge1$ such that for all $n\ge0$, $x\in \R^d$ with $|x|\ge r_0$,
		\[ |\va_n(x)|\le c_1e^{\lambda_n T_0/2}\exp\left(-{V_+(x)}\right)\left[\exp\left(-c\inf_{z\in B(x,|x|/2)}\widetilde K(z)\right)+\exp\left(- c_2 |x|^2 \right)\right].\]
		\item[{\rm(ii)}]  Assume that \begin{equation}\label{e:ess2*}\limsup_{|x|\to \infty} \frac{|x|^2}{\inf_{z\in B(x,|x|/2)}\widetilde K(z)}<\infty, \quad \limsup_{|x|\to\infty}\frac{V_-(x)}{|x|\inf_{z\in B(x,|x|/2)}\widetilde K(z)^{1/2}}=0.\end{equation} Then, there are constants $c_1,c_2,r_0>0$ and $T_0\ge1$ such that for all $n\ge0$ and $x\in \R^d$ with $|x|\ge r_0$,
		\[ |\va_n(x)|\le c_1e^{\lambda_n T_0/2}\exp\left(-{V_+(x)}\right)\exp\left(-c_2|x|\inf_{z\in B(x,|x|/2)}\widetilde K(z)^{1/2}\right).\]
	\end{itemize}
\end{proposition}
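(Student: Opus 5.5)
The plan is to start from \eqref{e:eif} and choose the free parameters $t>s>0$ (possibly depending on $x$) so that both exponential terms collapse to the claimed form. The prefactor $e^{-V(x)}$ must also be converted into $e^{-V_+(x)}$. Since $e^{-V(x)}=e^{-V_+(x)}e^{V_-(x)}$, the extra factor $e^{V_-(x)}$ has to be absorbed by the decay coming from the bracket. To keep $e^{\lambda_n t/2}$ uniform in $n$ and $x$, we will take $t=T_0$ fixed and $s\le T_0/2$. Then $(t-s)^{-d/4}\le (T_0/2)^{-d/4}$ and $e^{\lambda_n t/2}=e^{\lambda_n T_0/2}$, so the whole argument reduces to bounding
\[
e^{V_-(x)}\left[\exp\left(-c_5 s\,\kappa(x)\right)+\exp\left(-c_5\left(s+\frac{(1+|x|)^2}{s}\right)\right)\right],\qquad \kappa(x):=\inf_{z\in B(x,|x|/2)}\widetilde K(z),
\]
for a suitable choice of $s=s(x)\in(0,T_0/2]$.

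For (i), let $c>0$ be given. We take $s$ constant and $T_0=2s$ large, namely $s=(c+C_0+1)/c_5$, where $C_0$ is chosen via \eqref{e:ess*} so that $V_-(x)\le C_0\kappa(x)$ for $|x|\ge r_0$. Note that $\kappa(x)\to\infty$ by \eqref{K1}, and we may assume $\kappa>0$ for $|x|\ge r_0$. The first term then gives $e^{V_-(x)-c_5s\kappa(x)}\le e^{-(c+1)\kappa(x)}\le e^{-c\kappa(x)}$. The second term is at most $e^{V_-(x)-c_5|x|^2/s}$. Since $V_-(x)/|x|^2\to0$, for $|x|\ge r_0$ (enlarged if needed) we have $V_-(x)\le \frac{c_5}{2s}|x|^2$, so this term is at most $e^{-c_2|x|^2}$ with $c_2=c_5/(2s)$. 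Combining the two bounds gives (i).

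For (ii), the two terms must be balanced by letting $s$ depend on $x$. We take $s(x)=|x|/\kappa(x)^{1/2}$. The first hypothesis in \eqref{e:ess2*} gives $|x|^2\le C\kappa(x)$, so $s(x)\le C^{1/2}$, and we can fix $T_0=2\max\{C^{1/2},1\}$. With this choice, $s\kappa=|x|\kappa^{1/2}$ and $(1+|x|)^2/s\ge |x|^2/s=|x|\kappa^{1/2}$, so both terms are bounded by $\exp(-c_5|x|\kappa(x)^{1/2})$. The second hypothesis in \eqref{e:ess2*} gives $V_-(x)\le \frac{c_5}{2}|x|\kappa(x)^{1/2}$ for large $|x|$, so the factor $e^{V_-}$ is absorbed and we obtain the bound with $c_2=c_5/2$.

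The main point to check is that the constants $c_4,c_5$ in \eqref{e:eif} are independent of $t$, $s$ and $x$. This is needed both for (i), where $T_0$ depends on $c$, and for (ii), where $s$ varies with $x$. I would verify this by inspecting the derivation: $\widetilde p(t-s,z,x)\le c(t-s)^{-d/2}$ holds uniformly, and the bound on $\widetilde P_s1(x)$ taken from \cite[Lemma 2.5]{CW} holds for all $s>0$ with uniform constants, after the shift by $m$ described above. The shift replaces $\lambda_n$ by $\lambda_n+m$, which only changes the constant by the factor $e^{mT_0/2}$.
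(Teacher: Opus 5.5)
Your proposal is correct and matches the paper's proof: fix $t=T_0$ in \eqref{e:eif}; in case (i) take $s=T_0/2$ with $T_0$ large depending on $c$; in case (ii) take the balancing choice $s(x)\approx |x|/\kappa(x)^{1/2}$ (the paper uses $(1+|x|)/(1+\kappa(x))^{1/2}$), with $T_0$ fixed through the first condition in \eqref{e:ess2*}; in both cases the factor $e^{V_-}$ is absorbed into the bracket. Your choice $c_5 s=c+C_0+1$ in (i) is slightly cleaner than the paper's $\max\{c\kappa,V_-\}$ condition, which as written needs $T_0$ doubled to absorb $c\kappa$ and $V_-$ at the same time; likewise, to guarantee $T_0\ge1$ you may first shrink $c_5$, which only weakens \eqref{e:eif}.
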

\begin{proof} According to \eqref{K1}, there is a constant $r_1>0$ for all $x\in \R^d$ with $|x|\ge r_1$,
	\[\inf_{z\in B(x,|x|/2)}\widetilde K(z)>0.\]
	
	(1) We first prove the assertion (i). Suppose that \eqref{e:ess*} holds. For any $c>0$, we can choose $T_0\ge1$ and $r_2\ge r_1$ such that  for all $x\in \R^d$ with $|x|\ge r_2$,
	\[\frac{c_5T_0}{2}\inf_{z\in B(x,|x|/2)} \widetilde K(z)\ge \max\left\{ c\inf_{z\in B(x,|x|/2)} \widetilde K(z), V_-(x)\right\}.\] On the other hand, we can further take $r_3\ge r_2$ such that for all $x\in \R^d$ with $|x|\ge r_3$,
	\[\frac{c_5(1+|x|)^2}{2T_0}\ge V_-(x).\] Here, $c_5$ is the constant given in \eqref{e:eif}. Then, the desired assertion holds with $r_0=r_3$  by applying \eqref{e:eif} with $s=t/2$ and $t=T_0$.

	(2) To prove the assertion (ii), we only need to consider the case that $x\in \R^d$ with $|x|\ge r_1$. We take $s=\frac{1+|x|}{\left(1+\inf_{z\in B(x,|x|/2)}\widetilde K(z)\right)^{1/2}}$ in \eqref{e:eif}. Then, under \eqref{e:ess2*}, there are constants $c_1,c_2>0$ and $r_2\ge r_1$ such that for all $n\ge0$, $t>s$ and $x\in \R^d$ with $|x|\ge r_2$,
	\[ |\va_n(x)|\le c_1 (t-s)^{-d/4} e^{\lambda_n t/2}\exp\left(-{V_+(x)}\right)\exp\left(-c_2|x|\inf_{z\in B(x,|x|/2)}\widetilde K(z)^{1/2}\right).\]
	Note that \[\sup_{|x|\ge r_2}\frac{1+|x|}{\left(1+\inf_{z\in B(x,|x|/2)}\widetilde K(z)\right)^{1/2}}<\infty,\] also due to \eqref{e:ess2*}. Hence, the desired assertion follows  by taking \[T_0=t=1+2\sup_{|x|\ge r_2}\frac{1+|x|}{\left(1+\inf_{z\in B(x,|x|/2)}\widetilde K(z)\right)^{1/2}}>0\] and $r_0=r_2$. The proof is complete.
\end{proof}

\section{General result and proofs of results in Section \ref{Introduction}}\label{PF1}

In this section, we will establish a general result for long-time behaviors of branching-diffusion process $(Z_t)_{t\ge0}$, which immediately yields Theorem \ref{main result0}.
For this, we will consider long-time behaviors of the linear functional $\langle Z_t, \phi \rangle $ for suitable functions $\phi$, which in turn is based on the relationship between the branching-diffusion process $(Z_t)_{t\ge0}$ and the following Feynman-Kac formula for the Schr\"odinger semigroup $(P_t)_{t\ge0}$:
\begin{equation}\label{E:FK}
P_t \phi(x) = \mathbb{E}_{\delta_x} (\langle Z_t, \phi \rangle) = \mathbb{E}_x \left[ \exp \left( \int_0^t -K(X_s) \, \d s \right) \phi(X_t) \right], \quad \phi \in C_b(\R^d), t\ge0, x \in\R^d,
\end{equation}
where $(X_t)_{t\ge0}$ is the diffusion process defined by \eqref{sde}.

Recall that under \eqref{K1}, the generator $-\cL$ of  the Schr\"odinger semigroup $(P_t)_{t\ge0}$ has discrete spectrum consisting of eigenvalues $\{\la_n\}_{n\ge0}$, which correspond to the eigenfunctions $\{\va_n\}_{n\ge0}$.
Let $\Pi$ denote the projection onto the space generated by the first eigenfunction $\va_0$; that is, for any measurable function $g$ such that $g\varphi_0 \in L^1(\mu)$,  
\begin{align}\label{pi}
	\Pi(g)(x) = \va_0(x) \int_{\R^d} g(y)\va_0(y) \,\mu(\d y),\quad x\in \R^d.
\end{align}
Let $H_{c,c_0}(x)$ be a positive, bounded and continuous function defined in \eqref{e:function}; that is, for $x\in \R^d$ with $|x|$ large enough,
\begin{equation*}
	H_{c,c_0}(x)=\begin{cases}\displaystyle
		\exp\left(-V_+(x)\right)\left[\exp\left(-c\inf_{z\in B(x,|x|/2)}\widetilde K(z)\right)+\exp(-c_0|x|^2)\right],&\quad \hbox{ if } \eqref{e:ess} \hbox{ holds;} \\
		\displaystyle \exp\left(-V_+(x)\right)\exp\left(-c_0|x|\inf_{z\in B(x,|x|/2)}\widetilde K(z)^{1/2}\right),&\quad \hbox{ if } \eqref{e:ess2} \hbox{ holds.}
	\end{cases}
\end{equation*}
In particular, according to Proposition \ref{P:2.1}, under Assumption \ref{A0}, there is a constant $C_0>0$ such that for all $x\in \R^d$ and $n\ge0$,
\begin{equation}\label{e:add3}|\va_n(x)| \le C_0  e^{\lambda_n T_0/2} H_{c,c_0}(x)\end{equation} for some proper $T_0\ge1$ and $c,c_0>0$. Here, we should mention that when \eqref{e:ess} holds, one can take $c>0$ large enough in the definition of $H_{c,c_0}(x)$.

\begin{theorem}\label{main result1}
	Under Assumption {\rm\ref{A0}}, for any measurable function $\phi$ on $\R^d$ satisfying
	\begin{align}\label{ass1}
		\int_{\R^d} H_{c,c_0}(y) |\phi(y)| \,\mu(\d y) <\infty
	\end{align} with $c>0$ large enough and $c_0>0$ small enough,
	there are constants $T_0>0$ and $C_0>0$ such that all $t > T_0$,
	\begin{align}\label{results1}
		\|e^{\lambda_0t}P_t(\phi)-\Pi(\phi)\|_{L^{\infty}(\mu)}
		\le C_0 e^{-(\lambda_1-\lambda_0)t}.
	\end{align}
\end{theorem}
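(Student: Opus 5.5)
We will use the spectral expansion of the Feynman--Kac semigroup $(P_t)_{t\ge0}$ in $L^2(\mu)$ and then extend it to $\phi$ which only satisfies \eqref{ass1}, not $\phi\in L^2(\mu)$. Since $P_t$ is the semigroup with generator $\cL$, the relation $\widetilde\cL=e^{V}\cL e^{-V}$ gives the heat kernel of $P_t$ with respect to $\mu$:
\[
p(t,x,y)=e^{-V(x)}\widetilde p(t,x,y)e^{-V(y)}.
\]
Indeed, $P_t f(x)=e^{-V(x)}\widetilde P_t(e^{V}f)(x)$, and $\mu(\d y)=e^{2V(y)}\d y$. Combining this with \eqref{e:add} and $\va_n=e^{-V}\widetilde\va_n$, we get for all $t>0$ and $x,y\in\R^d$
\[
p(t,x,y)=\sum_{n=0}^\infty e^{-\lambda_n t}\va_n(x)\va_n(y).
\]
This gives the representation
\[
e^{\lambda_0 t}P_t\phi(x)-\Pi(\phi)(x)=\sum_{n\ge1}e^{-(\lambda_n-\lambda_0)t}\va_n(x)\int_{\R^d}\va_n(y)\phi(y)\,\mu(\d y).
\]
Two things need care here: identifying the probabilistic semigroup \eqref{E:FK} with the $L^2$ semigroup, and interchanging sum and integral. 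Both should follow from absolute convergence, which is established next.

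The main estimate comes from \eqref{e:add3}: $|\va_n(x)|\le C_0e^{\lambda_nT_0/2}H_{c,c_0}(x)$. It first shows that each coefficient $\int\va_n\phi\,\d\mu$ is finite and bounded by $C_0e^{\lambda_nT_0/2}\int H_{c,c_0}|\phi|\,\d\mu$, which is finite by \eqref{ass1}. It also shows that $\sup_x|\va_n(x)|\le C_0e^{\lambda_nT_0/2}\|H_{c,c_0}\|_\infty$, since $H_{c,c_0}$ is bounded. Hence, for $t>T_0$ (the theorem's $T_0$ is taken larger than the $T_0$ in \eqref{e:add3}),
\[
\bigl|e^{\lambda_0 t}P_t\phi(x)-\Pi(\phi)(x)\bigr|\le C\sum_{n\ge1}e^{-(\lambda_n-\lambda_0)t+\lambda_nT_0}.
\]
To extract the rate, write $t=(t-T_1)+T_1$ with $T_1>T_0$ fixed, and use $\lambda_n-\lambda_0\ge\lambda_1-\lambda_0$ for $n\ge1$. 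The term for $n$ is then at most
\[
e^{-(\lambda_1-\lambda_0)(t-T_1)}\,e^{\lambda_0T_1}\,e^{-\lambda_n(T_1-T_0)}.
\]
The remaining sum $\sum_ne^{-\lambda_n(T_1-T_0)}$ is finite by \eqref{e:add2}. This yields \eqref{results1} for $t>T_1$, with the constant absorbing $e^{(\lambda_1-\lambda_0)T_1}$.

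The main obstacle is justifying the expansion for $\phi\notin L^2(\mu)$, such as $\phi\equiv1$, and for the probabilistic definition of $P_t\phi$. I would first prove it for bounded compactly supported $\phi$, where the $L^2(\mu)$ theory applies directly and the pointwise kernel series holds. I would then pass to general $\phi$ by monotone convergence, applied separately to $\phi_\pm$ truncated on balls. The bound above, $\sum_n e^{-\lambda_nt}|\va_n(x)||\va_n(y)|\le C(x)H_{c,c_0}(y)$ for $t>T_0$, gives a dominating function for dominated convergence. If needed, the $L^\infty(\mu)$ norm is handled by continuity of the eigenfunctions and local boundedness, as in Subsection~\ref{hk}.
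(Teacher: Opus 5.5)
Your proposal is correct and follows essentially the same route as the paper. The shared steps are the kernel identity $p(t,x,y)=e^{-V(x)}\widetilde p(t,x,y)e^{-V(y)}$, the eigenfunction expansion, the bound \eqref{e:add3} used both for $\|\va_n\|_{L^\infty(\mu)}$ and (together with \eqref{ass1}) for the coefficients $\int\va_n\phi\,\d\mu$, and summability from \eqref{e:add2}. Beyond the paper, you do the rate bookkeeping slightly differently, and you give an explicit Fubini/monotone-convergence justification for $\phi\notin L^2(\mu)$, with dominating function $C\,H_{c,c_0}(x)H_{c,c_0}(y)\sum_n e^{-\lambda_n(t-T_0)}$; the paper's proof leaves that step implicit.
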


As an application of Theorem \ref{main result1}, we can give the

\begin{proof}[Proof of Theorem $\ref{main result0}$] Suppose that \eqref{ass} holds, i.e., \eqref{ass1} is satisfied with $\phi\equiv1$. Then, according to Theorem \ref{main result1}, we have the following asymptotic for the total mass: for any $x\in \R^d$,
	\[
	\lim_{t \to \infty} e^{\lambda_0 t} \mathbb{E}_x (N_t) = \lim_{t \to \infty} e^{\lambda_0 t} P_t1(x)=\Pi(1)(x)
	= \va_0(x) \int_{\R^d} \va_0(y) \,\mu(\d y).
	\] The proof is complete.	
\end{proof}

Next, we will present the
\begin{proof}[Proof of Theorem {\rm\ref{main result1}}]
	Assume that Assumption \ref{A0} holds. Recall that $\widetilde\cL =e^{V} \cL (e^{-V} \cdot)$ and $\displaystyle \int_{\R^d} \widetilde p(t,x,x)\,\d x < \infty$ for any $t>0$.
	Then, it holds that
	\begin{equation}\label{relat}\widetilde P_t f = e^V P_t(e^{-V}f),\quad  f\in C^\infty_c(\R^d),\,t>0. \end{equation}
	Thus, for any $f\in C_c^\infty(\R^d)$, $t>0$ and $x\in \R^d$,
	\begin{align*}
		P_t f(x) &= e^{-V(x)} \widetilde P_t(e^{V(\cdot)}f(\cdot))(x)\\
		&= e^{-V(x)} \int_{\R^d} \widetilde p(t,x,y) f(y) e^{V(y)}\,\d y \\
		&= \int_{\R^d} \widetilde p(t,x,y)e^{-V(x)}e^{-V(y)} f(y) \,\mu(\d y),
	\end{align*}
	which implies the heat kernel associated with $(P_t)_{t\ge0}$ exists (with respect to the measure $\mu$), denoted by $p(t,x,y)$. That is, for all $t>0$ and $x,y\in \R^d$,  \[p(t,x,y)=\widetilde p(t,x,y)e^{-V(x)}e^{-V(y)}.\]
	Furthermore, for all $t>0$,
	\[\int_{\R^d}  p(t,x,x)\,\mu(\d x) = \int_{\R^d} \widetilde p(t,x,x)e^{-2V(x)}\,\mu(\d x) = \int_{\R^d} \widetilde p(t,x,x)\,\d x < \infty.\]
	Therefore, with aid of this and the remark in the end of Subsection \ref{ode}, we have
	\begin{align}\label{sd}
		p(t,x,y)=\sum_{n=0}^\infty e^{-\lambda _n t} \va_n(x) \va_n(y),\quad t>0,~x,y\in \R^d,\end{align}
	where $\{\la_n\}_{n\ge0}$ are the associated eigenvalues corresponding to the eigenfunctions $\{\va_n\}_{n\ge0}$.
	
	For any measurable function $\phi$, any $t>0$ and $x\in \R^d$,
	\begin{align*}
		P_t(\phi-\Pi(\phi))(x)
		=&\sum_{n=0}^{\infty} e^{-\la_{n}t}\va_n(x)
		\int_{\R^d}\va_n(y)\left[\phi(y)-\Pi(\phi)(y)\right]\,\mu(\d y)\\
		=&e^{-\la_{0}t}\va_0(x)
		\int_{\R^d} \va_0(y)\left[\phi(y)-\Pi(\phi)(y)\right]\,\mu(\d y)\\
		&+\sum_{n=1}^{\infty} e^{-\la_{n}t}\va_n(x)
		\int_{\R^d} \va_n(y)\left[\phi(y)-\Pi(\phi)(y)\right]\,\mu(\d y)\\
		=&\sum_{n=1}^{\infty} e^{-\la_{n}t}\va_n(x)
		\int_{\R^d} \va_n(y)\phi(y)\,\mu(\d y),
	\end{align*} where we used the definition of $\Pi(\phi)(x)$ and the facts that $\displaystyle\int_{\R^d} \va_n(x)\va_0(x)\,\mu(\d x)=0$ for all $n\ge1$ and $\|\va_n\|_{L^2(\mu)}=1$.
	Hence, for any $t> T_0$,
	\begin{align*}
		&\|P_t(\phi-\Pi(\phi))\|_{L^{\infty}(\mu)}\\
		&= \left\| \sum_{n=1}^{\infty} e^{-\la_{n}t}\va_n(\cdot)
		\int_{\R^d} \va_n(y)\phi(y)\,\mu(\d y) \right\|_{L^{\infty}(\mu)} \\
		&\le \sum_{n=1}^{\infty} e^{-\la_{n}t}
		\|\va_n\|_{L^{\infty}(\mu)}
		\int_{\R^d} |\va_n(y)\phi(y)| \,\mu(\d y)\\
		&\le c_1 \sum_{n=1}^{\infty} e^{-\la_{n}t} e^{\la_n T_0/2}
		\int_{\R^d} e^{\la_n T_0/2} H_{c,c_0}(y) |\phi(y)| \,\mu(\d y) \\
		&= c_1 \sum_{n=1}^{\infty} e^{-\la_{n}t} e^{\la_n T_0}
		\int_{\R^d} H_{c,c_0}(y) |\phi(y)| \,\mu(\d y)\le c_2 \sum_{n=1}^{\infty} e^{-\la_{n}(t-T_0)},
	\end{align*}
	where in both inequalities above we used \eqref{e:add3} and \eqref{ass1}.
	This, along with the fact that for all $t\ge T_0+1$,
	\[ \sum_{n=1}^{\infty} e^{-\la_{n}(t-T_0)}= e^{-\la_{1}t} e^{\la_{1} T_0} \sum_{n=1}^{\infty} e^{-(\la_{n}-\la_{1})(t-T_0)}=:c_3 e^{-\la_{1}t},\] (thanks to \eqref{e:add2}), yields that
	\[
	\|P_t(\phi-\Pi(\phi))\|_{L^{\infty}(\mu)} \le c_4 e^{-\la_1 t}, \quad t\ge T_0+1.
	\]
	
	Furthermore, according to the fact that $e^{\lambda_0 t}P_t \va_0(x)=\va_0(x)$ for all $t>0$ and $x\in \R^d$, we have
	\[
	\|e^{\lambda_0 t}P_t(\phi)-\Pi(\phi)\|_{L^{\infty}(\mu)}= e^{\lambda_0 t}\|P_t(\phi-\Pi(\phi))\|_{L^{\infty}(\mu)}.\]
	Therefore, the proof is complete.
\end{proof}

\begin{remark}
	We add a comment on the proof of Theorem \ref{main result1}. 		
	We start from the following computation.
	Assume that \eqref{K1} holds. For any measurable function $\phi$, $t>0$ and $x\in \R^d$, it holds that
	\begin{align*}
		\|P_t(\phi-\Pi(\phi))\|_{L^{\infty}(\mu)}&\le \sum_{n=1}^{\infty} e^{-\la_{n}t} \|\va_n\|_{L^{\infty}(\mu)}
		\left|\int_{\R^d} \va_n(y)\phi(y)\,\mu(\d y)\right|\\
		&\le \sum_{n=1}^{\infty} e^{-\la_{n}t} \|\va_n\|^2_{L^{\infty}(\mu)}\|\phi\|_{L^1(\mu)}\end{align*} and
	\begin{align*}
		\|P_t(\phi-\Pi(\phi))\|_{L^{\infty}(\mu)}&\le \sum_{n=1}^{\infty} e^{-\la_{n}t} \|\va_n\|_{L^{\infty}(\mu)}
		\left|\int_{\R^d} \va_n(y)\phi(y)\,\mu(\d y)\right|\\
		&\le \sum_{n=1}^{\infty} e^{-\la_{n}t}\|\va_n\|_{L^{\infty}(\mu)}\|\va_n\|_{L^{2}(\mu)}\|\phi\|_{L^{2}(\mu)}\\
		&=\sum_{n=1}^{\infty} e^{-\la_{n}t}\|\va_n\|_{L^{\infty}(\mu)}\|\phi\|_{L^{2}(\mu)}.\end{align*}
	Then, if there is $t_0>0$ so that $\|P_{t_0}\|_{L^{2}(\mu) \to L^{\infty}(\mu)}<\infty$, it follows from  $P_t \va_n(x) = e^{-\la_n t}\va_n(x)$ for all $t>0$ and $x\in \R^d$ that
	\begin{align*}\|\va_n\|_{L^{\infty}(\mu)}
		&\le e^{\la_n t_0} \|P_{t_0} \va_n\|_{L^{\infty}(\mu)}\\ &\le e^{\la_n t_0} \|P_{t_0}\|_{L^{2}(\mu) \to L^{\infty}(\mu)} \|\va_n\|_{L^{2}(\mu)}\\  &\le  e^{\la_n t_0}\|P_{t_0}\|_{L^{2}(\mu) \to L^{\infty}(\mu)}.\end{align*}
	Hence,  for any $\phi\in L^2(\mu)\cup L^1(\mu)$,  there exist $T_0>0$ and $c_1>0$ such that for all $t\ge T_0$,
	\[
	\|P_t(\phi-\Pi(\phi))\|_{L^{\infty}(\mu)} \le c_1 e^{-\la_1 t};\] that is,
	\[ \|e^{\lambda_0t}P_t(\phi)-\Pi(\phi)\|_{L^{\infty}(\mu)}
	\le c_1 e^{-(\lambda_1-\lambda_0)t}.\]
	However, it is non-trivial to show that $\|P_{t_0}\|_{L^{2}(\mu) \to L^{\infty}(\mu)}<\infty$ for some $t_0>0$, which is related to the so-called ultracontractivity of the semigroup $(P_t)_{t\ge0}$. Moreover, the arguments above also require that $\phi\in L^2(\mu)\cup L^1(\mu)$. In particular, taking $\phi \equiv 1$ yields the assertion of Theorem \ref{main result0}. Nevertheless, this
	implies that $\mu$ is a finite measure, which is a strong restriction to study long-time behaviors of the total mass $N_t$. Indeed, the key ingredient to obtain  Theorem \ref{main result1} is to establish explicit decay properties of the eigenfunctions $\{\va_n\}_{n\ge1}$.
\end{remark}

Next, we provide the

\begin{proof}[Proof of Example $\ref{example}$] It is clear that Assumption \ref{A0}(i) holds. Next, we verify that Assumption \ref{A0}(ii) and \eqref{ass} are satisfied, case by case.
	
	(i) When $\alpha\le \beta$ and $\alpha<2$, \eqref{e:ess} in Assumption \ref{A0}(ii) holds. Recall that $H_{c,c_0}(x)$ is positive, bounded and continuous. Below, we take $r_0\ge1$ large enough. Then,
	\begin{align*}
		\mu(H_{c,c_0}) & \le c_1 + \displaystyle\int_{\{|x|\ge r_0\}} e^{-V_+(x)}\left[\exp\left(-c\inf_{z\in B(x,|x|/2)} \widetilde K(z)\right) + \exp{\left(-c_0|x|^2\right)}\right] e^{2V(x)}\,\d x\\
		&\le c_1 + c_2\displaystyle\int_{\{|x|\ge r_0\}} \left[\exp{\left(-cc_3|x|^{\be}\right)}+\exp{\left(-c_0|x|^2\right)}\right] e^{V_{+}(x)-2V_{-}(x)}\,\d x \\
		&\le c_1 + c_4\displaystyle\int_{\{|x|\ge r_0\}} \left[\exp{\left(-cc_3|x|^{\be}+c_5 |x|^{\al}\right)} + \exp{\left(-c_0|x|^2+c_5 |x|^{\al}\right)}\right]\,\d x\\
		&< \infty,
	\end{align*}
	where in the third inequality we used the condition $V(x) - V_-(x)\le c_5{|x|}^{\al}+c_6$, and in the last inequality we used that facts that $c$ can be chosen large enough, $\beta\ge \alpha$ and $\alpha<2$.
	
	(ii) When $\beta\ge2$ and $\alpha<1+\beta/2$,  \eqref{e:ess2} in Assumption \ref{A0}(ii) holds. Below, we take $r_0\ge1$ large enough. Then,
	\begin{align*}
		\mu( H_{c,c_0}) & \le c_1+ \displaystyle\int_{\{|x|\ge r_0\}} \exp\left(-V_+(x)\right)\exp\left(-c_0|x|\inf_{z\in B(x,|x|/2)}\widetilde K(z)^{1/2}\right) e^{2V(x)}\,\d x\\
		&\le c_1 + c_2\displaystyle\int_{\{|x|\ge r_0\}} \exp{\left(-cc_3|x|^{\be/2 +1}\right)} e^{V_{+}(x)-2V_{-}(x)}\,\d x \\
		&\le c_1 + c_4\displaystyle\int_{\{|x|\ge r_0\}} \exp{\left(-cc_3|x|^{\be/2 +1}+c_5 |x|^{\al}\right)} \,\d x < \infty,
	\end{align*} where in the last inequality we used $\alpha<1+\beta/2$. The proof is complete.
\end{proof}

Finally, we give the
\begin{proof}[Proof of Theorem $\ref{THquasi}$] We use the notations in the proof of Theorem \ref{main result1}.
	For any $t>0$ and any bounded measurable function $\phi$,
	\begin{equation}\label{e:proof-add}\begin{split}
			& \int_{\R^d} \mathbb{E}_x \left[ \exp\left(-\int_0^t K(X_s)\,\d s\right) \phi(X_t) \right] \varphi_0(x) \,\mu(\d x) \\
			&= \int_{\R^d} \left[ \int_{\R^d} \phi(y) p(t, x, y) \,\mu(\d y) \right] \varphi_0(x) \,\mu(\d x) \\
			&= \int_{\R^d}\int_{\R^d} \phi(y)\left[ \sum_{n=0}^{\infty} e^{-\lambda_n t} \varphi_n(x) \varphi_n(y) \right] \varphi_0(x)  \,\mu(\d y) \,\mu(\d x) \\
			&= \sum_{n=0}^{\infty} e^{-\lambda_n t}  \int_{\R^d} \varphi_n(y) \phi(y) \,\mu(\d y) \int_{\R^d} \varphi_0(x) \varphi_n(x) \,\mu(\d x)\\
			&= e^{-\lambda_0 t}\int_{\R^d} \phi(y) \varphi_0(y) \,\mu(\d y),
	\end{split}\end{equation}
	where in the second equality we used \eqref{sd}, and the last equality follows from the fact that $\displaystyle\int_{\R^d} \va_n(x)\va_0(x)\,\mu(\d x)=0$ for all $n\ge1$.
	
	Note that, by \eqref{ass} and \eqref{e:add3}, as well as the fact that $\varphi_0(x)$ is continuous and strictly positive on $\R^d$,
	$\displaystyle\int_{\R^d} \varphi_0(y) \,\mu(\d y)<\infty$. In particular, $\nu$ given by \eqref{quasi-mes} is well defined. This along with \eqref{e:proof-add} yields \eqref{quasi}.
	
	Taking $\phi=1$ in \eqref{e:proof-add}, we get $$\int_{\R^d} \mathbb{E}_x \left[ \exp\left(-\int_0^t K(X_s)\,\d s\right) \right] \varphi_0(x) \,\mu(\d x) = e^{-\lambda_0 t}\int_{\R^d}  \varphi_0(y) \,\mu(\d y).$$ Then, \eqref{quasi1} follows from \eqref{quasi}.
	
	The proof of the uniqueness for quasi-stationary distribution is similar to these of  \cite[Theorem 2.3 and Corollary 2.4]{CMS}, so we omit it.
\end{proof}

Indeed, following the arguments in \cite[Section 4]{CMS}, we can further apply Theorem \ref{main result1} to study the domain of attraction of quasi-stationary distribution of the branching-diffusion process $(Z_t)_{t\ge0}$, as well as the existence of the associated $Q$-process; see \cite[Theorems 2.3--2.6]{CMS}. Since in this paper we focus on long-time behaviors of the total mass $N_t$, we do not present the details here.

\vspace{0.3cm}
\noindent\textbf{Acknowledgment}.
The research is supported by the National Key R\&D Program of China (2022YFA1006003) and  the National Natural Science Foundation of China (Nos.\ 12071076 and 12225104).

\

\noindent\textbf{Data Availability}.
Data sharing not applicable to this article as no datasets were generated or analysed during the current study.

\

\noindent\textbf{Declarations}.

\

\noindent\textbf{Competing interests}
The authors declare no competing interests.

\

\noindent\textbf{Conflict of Interest}
The authors have no competing interests to declare that are relevant to the content of this article.

\

\noindent\textbf{Publisher's Note}
Springer Nature remains neutral with regard to jurisdictional claims in published maps and institutional affiliations.

\

\vspace{0.3cm}
\end{document}